\newtheorem{theorem}{Theorem}[section]
\newtheorem{lemma}[theorem]{Lemma}
\newtheorem{corollary}[theorem]{Corollary}
\theoremstyle{definition}
\theoremstyle{remark}
\numberwithin{equation}{section}
\newcommand{\bea}{\begin{eqnarray}}
\newcommand{\eea}{\end{eqnarray}}
\newcommand{\ba}{\begin{array}}
\newcommand{\ea}{\end{array}}
\begin{document}
\title[On the fine spectrum]{On the fine spectrum of the second order
difference operator over the sequence spaces\\
$\ell _{p}$\ and $bv_{p},$ $\ (1<p<\infty ).$}
\author{Vatan KARAKAYA$^{\ast }$}
\address{Department of Mathematical Engineering, Yildiz Technical
University, Esenler, \.{I}stanbul, Turkey.}
\email{vkkaya@yahoo.com; vkkaya@yildiz.edu.tr}
\author{Manaf Dzh. MANAFOV}
\address{ADIYAMAN UNIVERSITY, Faculty of Arts and Sciences\\
Department of Mathematics, Ad\i yaman-TURKEY}
\email{mmanafov@adiyaman.edu.tr}
\author{Necip \c{S}\.{I}M\c{S}EK}
\address{\.{I}STANBUL COMMERCE UNIVERSITY, Faculty of Arts and Sciences\\
Department of Mathematics, \.{I}stanbul-TURKEY}
\email{necsimsek@yahoo.com}
\subjclass[2000]{ 47A10; 47B37;15A18}
\keywords{Spectrum of an operator, the sequence spaces $\ell _{p}$\ and $%
bv_{p}$, symmetric tri-band matrix.}

\begin{abstract}
In general, it is well known the behaviors of the symmetric tri-band
matrices on the Hilbert spaces. But the symmetric tri-band matrices have
different the behavior on the Banach spaces. The main purpose of this work
is to determine the fine spectra of the operator $U(s,r,s)$ defined by
symmetric tri-band matrix over the sequence spaces $\ell _{p}$\ and $bv_{p}$.
\end{abstract}

\maketitle

\section{Introduction}

As it is well known, the matrices play an important role in operator theory.
The spectrum of an operator generalizes the notion of eigenvalues for
matrices. In the calculation of the spectrum of an operator over a Banach
space, we mostly deal with three disjoint parts of the spectrum, which are
the point spectrum, the continuous spectrum and the residual spectrum. The
determination of these three parts of the spectrum of an operator is called
the fine spectra.

Over the years and different names the spectrum and fine spectra of linear
operators defined by some particular limitation matrices over some sequence
spaces have been studied.

In the existing literature, there are many papers concerning the spectrum
and the fine spectra of an operator over different sequence spaces. For
example; Gonzales\cite{gon}$,$ Akhmedov and Ba\c{s}ar\cite{akh2} computed
the fine spectra of \ the Ces\'{a}ro operator over the sequence spaces $\ell
_{p}$ and $c_{0}$ respectively$.$ Also Reade\cite{rea} and Okutoyi\cite{oku}
examined the spectrum of the Ces\'{a}ro operator over the spaces $c_{0}$ and 
$bv$ respectively. Later on Akhmedov and Ba\c{s}ar\cite{akh1,akh3} also
determined the spectrum of the Ces\'{a}ro operator and the fine spectrum of
the difference operator $\Delta $ over the sequence space $bv_{p}\left(
1<p<\infty \right) .$ Also, in \cite{wen}, Wenger studied the fine spectra
of H\"{o}lder summability operators over the space $c$ and Rhoades\cite{rho}
extended this result to the weighed mean method.

Recently, Karakaya and Altun$\left( \text{\cite{VM}, \cite{vkkalt}}\right) $%
computed respectively the fine spectras of the upper triangular double-band
matrices and the lacunary matrices as an operator over the sequence spaces $%
c_{0}$ and $c$, which are defined below.

Further informations on the spectrum and fine spectra of different operators
over some sequence spaces can be found in the list of references \cite{bil1}%
, \cite{bil},\cite{cos}, \cite{fur}.

The main purpose of our work is to determine the fine spectra of the
operator for which the corresponding matrix is the symmetrical tri-band
matrix $U(s,r,s)$ over the sequence spaces $\ell _{p}$ and $bv_{p}$ $\left(
1<p<\infty \right) .$

\section{Preliminaries and Notations}

Let $X$ and $Y$ be Banach spaces and $T:X\rightarrow Y$ be a bounded linear
operator. By $R(T)$, we denote the range of $T$, i.e., 
\begin{equation*}
R(T)=\{y\in Y:y=Tx;x\in X\}.
\end{equation*}%
By $B(X)$, we denote the set of all bounded linear operators on $X$ into
itself. If X is any Banach space and $T\in B(X),$ then the \emph{adjoint} $%
T^{\ast }$ of $T$ is a bounded linear operator on the dual $X^{\ast }$ of $X$
defined by $(T^{\ast }\phi )(x)=\phi (Tx)$ for all $\phi \in X^{\ast }$ and $%
x\in X$. Let $X\neq \{\theta \}$ be a complex normed space and $T:\mathcal{D}%
(T)\rightarrow X$ be a linear operator with domain $\mathcal{D}(T)\subset X$%
. By $T$ , we associate the operator 
\begin{equation*}
T_{\lambda }=T-\lambda I;
\end{equation*}%
where $\lambda $ is a complex number and $I$ is the identity operator on $%
\mathcal{D}(T)$. If $T_{\lambda }$ has an inverse, which is linear, we
denote it by $T_{\lambda }^{-1}$, that is 
\begin{equation*}
T_{\lambda }^{-1}=(T-\lambda I)^{-1},
\end{equation*}%
and it is called to be the \emph{resolvent operator} of $T$ . Many
properties of $T_{\lambda }$ and $T_{\lambda }^{-1}$ depend on $\lambda $,
and spectral theory is concerned with those properties. For instance, we
shall interest in the set of all $\lambda $ in the complex plane such that $%
T_{\lambda }^{-1}$ exists. Boundedness of $T_{\lambda }^{-1}$ is another
property that will be essential. We shall also ask for what all $\lambda $
in the domain of $T_{\lambda }^{-1}$ is dense in $X$. For investigation of $%
T $ , $T_{\lambda }$ and $T_{\lambda }^{-1}$ , we need some basic concepts
in spectral theory which are given as follows (see \cite{kre}, pp. 370-371):

Let $X \neq \{\theta\}$ be a complex normed space and $T : \mathcal{D}(T)
\rightarrow X$ be a linear operator with domain $\mathcal{D}(T) \subset X$.
A \emph{regular value} $\lambda$ of $T$ is a complex number such that 
\newline
(\textbf{R1}) $T_\lambda^{-1}$ exists, \newline
(\textbf{R2}) $T_\lambda^{-1}$ is bounded, \newline
(\textbf{R3}) $T_\lambda^{-1}$ is defined on a set which is dense in $X$.

The \emph{resolvent set} $\rho (T)$ of $T$ is the set of all regular values $%
\lambda $ of $T$ . Its complement $\sigma (T)=\mathbb{C}\backslash \rho (T)$
in the complex plane $\mathbb{C}$ is called the \emph{spectrum} of $T$ .
Furthermore, the spectrum $\sigma (T)$ is partitioned into three disjoint
sets as follows: The \emph{point spectrum} $\sigma _{p}(T)$ is the set such
that $T_{\lambda }^{-1}$ does not exist. A $\lambda \in \sigma _{p}(T)$ is
called an \emph{eigenvalue} of $T$ . The \emph{continuous spectrum} $\sigma
_{c}(T)$ is the set such that $T_{\lambda }^{-1}$ exists and satisfies (R3)
but not (R2). The \emph{residual spectrum} $\sigma _{r}(T)$ is the set such
that $T_{\lambda }^{-1}$ exists but not satisfy (R3).

We shall write $\ell _{\infty }$, $c$ and $c_{0}$ for the spaces of all
bounded, convergent and null sequences, respectively. The sequence spaces $%
\ell _{p}$ and $bv_{p}$ are defined by 
\begin{eqnarray}
\ell _{p} &=&\{x\in \omega :\sum_{k}|x_{k}|^{p}<\infty \}  \notag \\
bv_{p} &=&\{x\in \omega :\sum_{k}|x_{k}-x_{k+1}|^{p}<\infty \}.  \notag
\end{eqnarray}%
Let $\mu $ and $\gamma $ be two sequence spaces and $A=(a_{nk})$ be an
infinite matrix of real or complex numbers $a_{nk}$ , where $n$, $k\in 
\mathbb{N=}\left\{ 0,1,2,...\right\} $. Then, we say that $A$ defines a
matrix mapping from $\mu $ into $\gamma $, and we denote it by writing $%
A:\mu \rightarrow \gamma $, if for every sequence $x=(x_{k})\in \mu $ the
sequence $Ax=\{(Ax)_{n}\}$, the $A$-transform of $x$, is in $\gamma $; where 
\begin{equation}
(Ax)_{n}=\sum_{k}a_{nk}x_{k}\qquad (n\in \mathbb{N}).  \label{1}
\end{equation}%
By $(\mu :\gamma )$, we denote the class of all matrices $A$ such that $%
A:\mu \rightarrow \gamma $. Thus, $A\in (\mu :\gamma )$ if and only if the
series on the right side of (\ref{1}) converges for each $n\in \mathbb{N}$
and every $x\in \mu $, and we have $Ax=\{(Ax)_{n}\}_{n\in \mathbb{N}}\in
\gamma $ for all $x\in \mu $.

The symmetrical tri-band matrix used in our work is of the following form: 
\begin{equation*}
U(s,r,s)=\left[ 
\begin{array}{ccccc}
r & s & 0 & 0 & ... \\ 
s & r & s & 0 & ... \\ 
0 & s & r & s & ... \\ 
0 & 0 & s & r & ... \\ 
. & . & . & . & \ddots%
\end{array}%
\right]
\end{equation*}

Now let us give some the lemmas which we need in sequel

\label{Lem.1.1}

\begin{lemma}
Define the sets $D_{\infty }$ and $D_{q}$ by%
\begin{equation*}
D_{\infty }=\left\{ x=(x_{k})\in w:\sup_{k\in \mathbb{N}}\left\vert
\dsum\limits_{j=k}^{\infty }x_{j}\right\vert <\infty \right\}
\end{equation*}%
and%
\begin{equation*}
D_{q}=\left\{ x=(x_{k})\in w:\dsum\limits_{k}\left\vert
\dsum\limits_{j=k}^{\infty }x_{j}\right\vert ^{q}<\infty \right\} ,\text{ }%
(1<q<\infty ).
\end{equation*}%
Then, the sets $D_{\infty }$ and $D_{q}$ are the Banach spaces with the norms%
\begin{equation*}
\left\Vert a\right\Vert _{D_{\infty }}=\sup_{k\in \mathbb{N}}\left\vert
\dsum\limits_{j=k}^{\infty }a_{j}\right\vert
\end{equation*}%
and%
\begin{equation*}
\left\Vert a\right\Vert _{D_{q}}=\left( \dsum\limits_{k}\left\vert
\dsum\limits_{j=k}^{\infty }a_{j}\right\vert ^{q}\right) ^{1/q}.
\end{equation*}%
Additionally,$(i)$ $D_{\infty }$ is isometrically isomorphic to $%
bv_{1}^{\ast }$, \cite[Theorem 3.3]{Imaminezhad-Miri}

$(ii)$ $D_{q}$ is isometrically isomorphic to $bv_{p}^{\ast }$, \cite[%
Theorem 2.3]{Akhmedov-Basar-4}
\end{lemma}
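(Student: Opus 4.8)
The plan is to identify each of $D_{\infty}$ and $D_{q}$ with a familiar Banach sequence space through the \emph{tail-sum} map, and then to transport completeness across an isometry. I would first introduce the linear map $T$ defined by
\[
(Ta)_{k}=\sum_{j=k}^{\infty}a_{j}\qquad(k\in\mathbb{N})
\]
on the set of all sequences whose tail series converge. By construction the defining conditions of $D_{\infty}$ and $D_{q}$ are exactly $Ta\in\ell_{\infty}$ and $Ta\in\ell_{q}$, and the two norms satisfy $\left\Vert a\right\Vert _{D_{\infty}}=\left\Vert Ta\right\Vert _{\ell_{\infty}}$ and $\left\Vert a\right\Vert _{D_{q}}=\left\Vert Ta\right\Vert _{\ell_{q}}$; in particular membership already forces every tail series to converge. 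Homogeneity and the triangle inequality for both norms are then inherited verbatim from $\ell_{\infty}$ and $\ell_{q}$ via the linearity of $T$, so the only axiom requiring a separate word is definiteness, which I would deduce from the identity $a_{k}=(Ta)_{k}-(Ta)_{k+1}$: if $Ta=0$ then $a=0$.

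The key step is to check that $T$ is a bijective isometry onto the appropriate target, with inverse the difference operator $\Delta$ given by $(\Delta y)_{k}=y_{k}-y_{k+1}$. Injectivity is immediate from the identity above. For surjectivity I would compute, for a candidate target $y$,
\[
(T\Delta y)_{k}=\sum_{j=k}^{\infty}(y_{j}-y_{j+1})=y_{k}-\lim_{n\to\infty}y_{n+1},
\]
so that the telescoping sum reconstructs $y_{k}$ precisely when $y_{n}\to0$. This is where the two cases separate: every $y\in\ell_{q}$ with $1<q<\infty$ is automatically null, so $T$ carries $D_{q}$ isometrically \emph{onto} all of $\ell_{q}$; a merely bounded sequence need not vanish, and since each $Ta$ is the tail of a convergent series it tends to $0$, so the range of $D_{\infty}$ is exactly $c_{0}$, the closed subspace of null sequences of $\ell_{\infty}$. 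In both cases $\Delta$ is a two-sided inverse and is itself norm-preserving.

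To finish I would invoke completeness of the targets: $\ell_{q}$ is a Banach space and $c_{0}$ is a closed subspace of $\ell_{\infty}$, hence Banach, and since isometric isomorphism preserves completeness, $D_{q}\cong\ell_{q}$ and $D_{\infty}\cong c_{0}$ are Banach spaces. (Alternatively, completeness is also guaranteed once parts $(i)$ and $(ii)$ are invoked, since a dual space is automatically complete; those two identifications I would simply quote from \cite{Imaminezhad-Miri} and \cite{Akhmedov-Basar-4}.) I expect the main obstacle to be the surjectivity/inverse analysis rather than the norm axioms: one must confirm that the telescoping sum genuinely recovers $y$, which rests on the target sequences tending to zero, and it is precisely this point that forces the image of $D_{\infty}$ to be $c_{0}$ and not all of $\ell_{\infty}$.
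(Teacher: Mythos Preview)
Your argument is correct: the tail-sum map $T$ is a linear isometry from $D_{q}$ onto $\ell_{q}$ and from $D_{\infty}$ onto $c_{0}$, and completeness transfers across these isomorphisms. The telescoping computation and the observation that tails of a convergent series vanish (forcing the image in the $D_{\infty}$ case to land in $c_{0}$ rather than all of $\ell_{\infty}$) are exactly the points that need care, and you handle them cleanly.

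As for comparison with the paper: there is in fact nothing to compare. The paper does not prove this lemma at all; it is listed among the preliminaries and the two isomorphism assertions $(i)$ and $(ii)$ are simply quoted from \cite{Imaminezhad-Miri} and \cite{Akhmedov-Basar-4}, with the Banach-space claim left unargued. Your write-up therefore supplies a self-contained proof where the paper offers none. One minor remark: your isometry $D_{\infty}\cong c_{0}$ is of course compatible with the paper's assertion $D_{\infty}\cong bv_{1}^{\ast}$, but the latter identification is a separate (cited) fact about the dual of $bv_{1}$, not something your tail-sum argument establishes directly; you are right to defer it to the reference.
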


The basis of the space $bv_{p}$ is also constructed and is given by the
following lemma:

\begin{lemma}
\label{Lem.1.2} $($\cite{Basar-Altay}, Theorem 3.1$)$ Define the sequence $%
b^{(k)}=\left\{ b_{n}^{(k)}\right\} _{n\in \mathbb{N}}$ of the elements of
the space $bv_{p}$ for every fixed $k\in \mathbb{N}$ by%
\begin{equation*}
b_{n}^{(k)}=\left\{ 
\begin{array}{cc}
0, & (n<k) \\ 
1, & (n\geq k)%
\end{array}%
\right\} \text{ \ \ for all }n\in \mathbb{N}.
\end{equation*}%
Then the sequence $\left\{ b_{n}^{(k)}\right\} _{n\in \mathbb{N}}$ is a
basis for the space $bv_{p}$ and any $x\in bv_{p}$\ has a unique
representation of the form%
\begin{equation*}
x=\dsum\limits_{k}\lambda _{k}b^{(k)}
\end{equation*}%
where $\lambda _{k}=x_{k}-x_{k-1}$ for all $k\in \mathbb{N}.$
\end{lemma}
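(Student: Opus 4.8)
The plan is to verify the three defining properties of a Schauder basis directly, exploiting the fact that the backward difference map $\Delta$, given by $(\Delta x)_n = x_n - x_{n-1}$ with the convention $x_{-1}=0$, is an isometric isomorphism of $bv_{p}$ onto $\ell _{p}$; this reduces the statement to the familiar behaviour of the canonical basis of $\ell _{p}$. As a preliminary step I would check that each $b^{(k)}$ indeed lies in $bv_{p}$. Since $b^{(k)}$ is the sequence that jumps from $0$ to $1$ exactly at index $k$, its consecutive differences vanish except for the single value $1$ at position $k$; hence $\left\Vert b^{(k)}\right\Vert _{bv_{p}}=1$ and $b^{(k)}\in bv_{p}$.

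For the representation, write $\lambda _{k}=x_{k}-x_{k-1}$ and form the partial sum $S_{m}=\sum_{k=0}^{m}\lambda _{k}b^{(k)}$. A telescoping computation shows that the $n$-th coordinate of $S_{m}$ equals $x_{n}$ for $n\leq m$ and equals $x_{m}$ for $n>m$, so that $S_{m}=(x_{0},\dots ,x_{m},x_{m},x_{m},\dots )$. Consequently the only nonzero consecutive differences of $x-S_{m}$ occur at indices $n>m$, where they equal $\lambda _{n}$, and therefore
\begin{equation*}
\left\Vert x-S_{m}\right\Vert _{bv_{p}}^{p}=\dsum\limits_{n=m+1}^{\infty }\left\vert \lambda _{n}\right\vert ^{p}.
\end{equation*}
Because $x\in bv_{p}$ forces $\sum_{n}\left\vert \lambda _{n}\right\vert ^{p}<\infty $, this tail tends to $0$ as $m\rightarrow \infty $, giving $S_{m}\rightarrow x$ in the norm of $bv_{p}$.

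It remains to establish uniqueness. I would first note that each coordinate functional $x\mapsto x_{n}$ is continuous on $bv_{p}$: indeed, writing $x_{n}=\sum_{k=0}^{n}\lambda _{k}$ and applying H\"{o}lder's inequality with $1/p+1/q=1$ gives $\left\vert x_{n}\right\vert \leq (n+1)^{1/q}\left\Vert x\right\Vert _{bv_{p}}$ (equivalently, continuity follows at once from the isometry $\Delta $). Hence if $x=\sum_{k}\mu _{k}b^{(k)}$ converges in $bv_{p}$, then passing to the $n$-th coordinate yields $x_{n}=\sum_{k\leq n}\mu _{k}$; taking successive differences forces $\mu _{n}=x_{n}-x_{n-1}=\lambda _{n}$, so the representation is unique.

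The routine portions of the argument are the telescoping identity and the tail estimate. The one point requiring genuine care is the passage from norm convergence to coordinatewise convergence needed for uniqueness; this is exactly why the continuity of the coordinate functionals — or, more structurally, the isometric identification $bv_{p}\cong \ell _{p}$ via the difference operator $\Delta $ — is the fact doing the real work, and I would foreground it throughout.
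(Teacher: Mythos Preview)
Your argument is correct: the telescoping identification of the partial sums, the tail estimate for $\left\Vert x-S_{m}\right\Vert_{bv_{p}}$, and the uniqueness step via continuity of the coordinate functionals are all in order, and the underlying structural fact --- that the backward difference $\Delta$ is an isometric isomorphism of $bv_{p}$ onto $\ell_{p}$ carrying $\{b^{(k)}\}$ to the unit vector basis $\{e^{(k)}\}$ --- is exactly the right viewpoint.

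There is, however, nothing to compare against in the present paper: Lemma~\ref{Lem.1.2} is quoted without proof from Ba\c{s}ar and Altay \cite{Basar-Altay}, Theorem~3.1, and no argument is supplied here. Your proof is precisely the standard one given in that reference (and, in essence, the only natural one), so it agrees with the cited source. One cosmetic remark: the paper's definition of $bv_{p}$ uses forward differences $x_{k}-x_{k+1}$, whereas you work with backward differences $x_{k}-x_{k-1}$ under the convention $x_{-1}=0$; these yield the same space and (up to the harmless inclusion or omission of a $|x_{0}|^{p}$ term) the same norm, so the discrepancy is purely notational and does not affect the argument.
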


\begin{lemma}
\label{Lem.1.3} $($\cite{Choud-Nanda}, p.253, Theorem 34.16$)$ The matrix $%
A=(a_{nk})$ gives rise to a bounded linear operator $T\in B(\ell _{1})$ from 
$\ell _{1}$ to itself if and only if the supremum of $\ell _{1}$\ norms of
the columns of $A$ is bounded.
\end{lemma}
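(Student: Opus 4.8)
The plan is to prove both implications directly, writing $M := \sup_{k}\sum_{n}|a_{nk}|$ for the common supremum of the $\ell_{1}$-norms of the columns of $A$, and to show along the way that in fact $\|T\| = M$.

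For the sufficiency I would assume $M<\infty$ and take any $x=(x_{k})\in \ell_{1}$. First I would check that the series defining $(Tx)_{n}=\sum_{k}a_{nk}x_{k}$ converges absolutely for each fixed $n$: since $|a_{nk}|\le M$ for all $n,k$, we have $\sum_{k}|a_{nk}||x_{k}|\le M\|x\|_{1}<\infty$. Then I would bound the $\ell_{1}$-norm of $Tx$ by interchanging the order of summation,
\begin{equation*}
\|Tx\|_{1}=\sum_{n}\Big|\sum_{k}a_{nk}x_{k}\Big|\le \sum_{n}\sum_{k}|a_{nk}||x_{k}|=\sum_{k}|x_{k}|\sum_{n}|a_{nk}|\le M\|x\|_{1},
\end{equation*}
which simultaneously shows $Tx\in \ell_{1}$ and that $T$ is bounded with $\|T\|\le M$.

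For the necessity I would assume $T\in B(\ell_{1})$ and test it against the canonical unit sequences $e^{(k)}$, whose only nonzero entry is a $1$ in position $k$, so that $\|e^{(k)}\|_{1}=1$. Since $T e^{(k)}$ is precisely the $k$-th column of $A$, boundedness gives $\sum_{n}|a_{nk}|=\|T e^{(k)}\|_{1}\le \|T\|\,\|e^{(k)}\|_{1}=\|T\|$. Taking the supremum over $k$ yields $M\le \|T\|<\infty$, which is the desired bound on the column norms; combined with the estimate from the sufficiency direction this also pins down $\|T\|=M$.

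The only delicate point is the interchange of the two summations in the sufficiency step. This is legitimate because every term $|a_{nk}||x_{k}|$ is nonnegative, so the double series may be summed in either order (Tonelli for double series), and the same nonnegativity guarantees that the iterated sum $\sum_{k}|x_{k}|\sum_{n}|a_{nk}|$ has an unambiguous value. Everything else reduces to the definition of the operator norm and the action of $A$ on the standard basis of $\ell_{1}$, so I expect no substantive obstacle beyond recording this justification.
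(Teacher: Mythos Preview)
Your argument is correct and is the standard proof of this classical characterization; the Tonelli justification for swapping the nonnegative double sum is exactly the right point to flag, and testing on the unit vectors $e^{(k)}$ is the natural way to extract the lower bound $M\le\|T\|$.

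The paper itself does not supply a proof of this lemma at all: it is stated as a quotation from \cite{Choud-Nanda} (p.~253, Theorem~34.16) and used as a black box. So there is no ``paper's own proof'' to compare against---you have simply filled in what the authors left to the reference, and done so correctly.
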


\begin{lemma}
\label{Lem.1.4} $($\cite{Choud-Nanda}, p.245, Theorem 34.3$)$ The matrix $%
A=(a_{nk})$ gives rise to a bounded linear operator $T\in B(\ell _{\infty })$
from $\ell _{\infty }$ to itself if and only if the supremum of $\ell _{1}$\
norms of the rows of $A$ is bounded.
\end{lemma}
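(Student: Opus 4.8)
The plan is to prove both implications of the equivalence, and in doing so to identify the operator norm of $T$ on $\ell_\infty$ with the quantity
\[
M:=\sup_{n\in\mathbb{N}}\sum_{k}\abs{a_{nk}}.
\]
The two directions are of quite different character, so I would handle them separately.

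First I would dispatch \emph{sufficiency}. Assume $M<\infty$. For any $x=(x_k)\in\ell_\infty$ and any fixed $n$, the defining series $\sum_k a_{nk}x_k$ converges absolutely, since
\[
\sum_{k}\abs{a_{nk}x_k}\le \|x\|_\infty\sum_{k}\abs{a_{nk}}\le M\|x\|_\infty .
\]
Hence $(Tx)_n$ is well defined and $\abs{(Tx)_n}\le M\|x\|_\infty$ for every $n$, so $Tx\in\ell_\infty$ with $\|Tx\|_\infty\le M\|x\|_\infty$. This shows $T\in B(\ell_\infty)$ together with the bound $\|T\|\le M$.

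The reverse, \emph{necessity}, is where the work lies. Assume $T\in B(\ell_\infty)$. Fix a row index $n$ and a cutoff $N$, and test $T$ against the finite sequence $x^{(N)}=(x_k)$ defined by $x_k=\overline{a_{nk}}/\abs{a_{nk}}$ when $a_{nk}\neq 0$ and $x_k=0$ otherwise, for $k\le N$, and $x_k=0$ for $k>N$. Then $x^{(N)}\in\ell_\infty$ with $\|x^{(N)}\|_\infty\le 1$, and by construction the $n$-th coordinate of its image is the partial absolute row sum:
\[
(Tx^{(N)})_n=\sum_{k\le N}a_{nk}\,x_k=\sum_{k\le N}\abs{a_{nk}} .
\]
Combining this with boundedness gives $\sum_{k\le N}\abs{a_{nk}}=\abs{(Tx^{(N)})_n}\le\|Tx^{(N)}\|_\infty\le\|T\|$; since $N$ was arbitrary, letting $N\to\infty$ yields $\sum_k\abs{a_{nk}}\le\|T\|$, and taking the supremum over $n$ gives $M\le\|T\|<\infty$.

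I expect the only delicate point to be this necessity argument. One must choose the test sequences row by row and truncate them so that they genuinely lie in $\ell_\infty$ (a single infinite sign sequence would not let us pass to partial sums so cleanly), and one must use the conjugate-sign weights $\overline{a_{nk}}/\abs{a_{nk}}$ — rather than a naive real sign — so that the full absolute row sum is realized in one coordinate of the image even in the complex case. The absolute-convergence estimate underlying sufficiency, by contrast, is entirely routine. Together the two bounds give $\|T\|=M$, which proves the stated characterization.
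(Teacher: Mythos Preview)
Your argument is correct and is the standard proof of this classical characterization; in particular, both the sufficiency estimate and the sign-truncation test for necessity are valid, and together they identify $\|T\|_{(\ell_\infty,\ell_\infty)}$ with $\sup_n\sum_k\abs{a_{nk}}$. However, there is nothing in the paper to compare it with: this lemma is quoted without proof as Theorem~34.3 from \cite{Choud-Nanda}, so the paper supplies no argument of its own.
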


\begin{lemma}
\label{Lem.1.5} $($\cite{Choud-Nanda}, p.254, Theorem 34.18$)$ Let $%
1<p<\infty $ and $A\in (l_{\infty },l_{\infty })\cap (l_{1},l_{1})$. Then $%
A\in (l_{p},l_{p}).$
\end{lemma}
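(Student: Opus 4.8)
The plan is to give a direct, self-contained argument via Hölder's inequality rather than invoking the Riesz--Thorin interpolation theorem, although conceptually the statement is precisely the interpolation of the two endpoint bounds on $\ell _{1}$ and $\ell _{\infty }$. By Lemma \ref{Lem.1.3} and Lemma \ref{Lem.1.4}, the hypotheses $A\in (\ell _{1},\ell _{1})$ and $A\in (\ell _{\infty },\ell _{\infty })$ furnish two finite constants
\begin{equation*}
M_{1}=\sup_{k}\sum_{n}|a_{nk}|<\infty ,\qquad M_{\infty }=\sup_{n}\sum_{k}|a_{nk}|<\infty ,
\end{equation*}
namely the suprema of the $\ell _{1}$ norms of the columns and of the rows, respectively. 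I would fix $x=(x_{k})\in \ell _{p}$ and let $q$ denote the conjugate exponent, so that $1/p+1/q=1$.

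The first step is to estimate a single coordinate of $Ax$. Writing $|a_{nk}|=|a_{nk}|^{1/q}|a_{nk}|^{1/p}$ and applying Hölder's inequality with exponents $q$ and $p$ gives
\begin{equation*}
\Big|\sum_{k}a_{nk}x_{k}\Big|\leq \sum_{k}|a_{nk}|^{1/q}\big(|a_{nk}|^{1/p}|x_{k}|\big)\leq \Big(\sum_{k}|a_{nk}|\Big)^{1/q}\Big(\sum_{k}|a_{nk}||x_{k}|^{p}\Big)^{1/p}.
\end{equation*}
Bounding the first factor by $M_{\infty }^{1/q}$ and raising to the $p$-th power yields the pointwise estimate $|\sum_{k}a_{nk}x_{k}|^{p}\leq M_{\infty }^{p/q}\sum_{k}|a_{nk}||x_{k}|^{p}$.

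The second step is to sum over $n$ and interchange the order of summation, which is legitimate since all terms are nonnegative (Tonelli). This produces
\begin{equation*}
\sum_{n}\Big|\sum_{k}a_{nk}x_{k}\Big|^{p}\leq M_{\infty }^{p/q}\sum_{k}|x_{k}|^{p}\sum_{n}|a_{nk}|\leq M_{\infty }^{p/q}M_{1}\sum_{k}|x_{k}|^{p},
\end{equation*}
so that $\left\Vert Ax\right\Vert _{p}\leq M_{\infty }^{1/q}M_{1}^{1/p}\left\Vert x\right\Vert _{p}<\infty $. In particular each series $\sum_{k}a_{nk}x_{k}$ converges absolutely, $Ax\in \ell _{p}$, and the induced map is bounded, which is exactly the assertion $A\in (\ell _{p},\ell _{p})$.

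I do not expect a serious obstacle here; the only points requiring care are the correct placement of the exponents $1/p$ and $1/q$ in the Hölder split, so that the column sum (controlled by $M_{1}$) and the row sum (controlled by $M_{\infty }$) each appear exactly once, together with the justification of the interchange of summations by nonnegativity. As a consistency check, the resulting operator-norm bound $M_{\infty }^{1/q}M_{1}^{1/p}$ coincides with the constant predicted by Riesz--Thorin interpolation with parameter $\theta =1/q$.
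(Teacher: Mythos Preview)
Your argument is correct; this is precisely the classical Schur test (equivalently, the elementary hand computation behind Riesz--Thorin at the endpoints $p=1$ and $p=\infty$), and the steps are carried out cleanly. The paper itself does not prove this lemma: it is quoted verbatim from \cite{Choud-Nanda} without argument, so there is no in-paper proof to compare against. Your self-contained derivation is therefore strictly more than what the paper supplies, and the operator-norm bound $M_{\infty }^{1/q}M_{1}^{1/p}$ you obtain is exactly the constant that Riesz--Thorin would predict.
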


\begin{corollary}
\label{Cor.1.6} Let $\mu \in \left\{ l_{p},bv_{p}\right\} $ $(1<p<\infty )$. 
$U(s,r,s):\mu \rightarrow \mu $ is a bounded linear operator and $\left\Vert
U(s,r,s)\right\Vert _{(\mu ,\mu )}=2|s|+|r|$.
\end{corollary}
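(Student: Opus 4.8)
The plan is to prove Corollary~\ref{Cor.1.6} in two stages: first establish the norm bound and boundedness on $\ell_p$, then transfer the result to $bv_p$.

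\textbf{Stage 1 ($\mu = \ell_p$).} The strategy is to invoke Lemma~\ref{Lem.1.5}, which reduces the problem on $\ell_p$ to the two endpoint cases $\ell_1$ and $\ell_\infty$. First I would apply Lemma~\ref{Lem.1.4}: each row of $U(s,r,s)$ has $\ell_1$-norm equal to $|s|+|r|+|s| = 2|s|+|r|$ (interior rows) or $|r|+|s|$ (the first row), so the supremum of row norms is $2|s|+|r| < \infty$, giving $U(s,r,s)\in(\ell_\infty,\ell_\infty)$. By the symmetry of the matrix, each column has the same $\ell_1$-norm structure, so Lemma~\ref{Lem.1.3} gives $U(s,r,s)\in(\ell_1,\ell_1)$ with the same supremum $2|s|+|r|$. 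Lemma~\ref{Lem.1.5} then yields $U(s,r,s)\in(\ell_p,\ell_p)$ for $1<p<\infty$, so the operator is bounded. For the exact norm, the upper bound $\|U(s,r,s)\|_{(\ell_p,\ell_p)}\le 2|s|+|r|$ follows from the interpolation/comparison already implicit in the endpoint estimates; for the reverse inequality I would test the operator against a unit vector that nearly achieves the row sum, for instance by applying it to a suitably normalized sequence concentrated on consecutive coordinates where the three nonzero band entries reinforce, forcing $\|U(s,r,s)\|\ge 2|s|+|r|$.

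\textbf{Stage 2 ($\mu = bv_p$).} Here the idea is to exploit the isometric structure of $bv_p$ rather than estimate directly. Using the basis of Lemma~\ref{Lem.1.2}, every $x\in bv_p$ is determined by its difference sequence $\lambda_k = x_k - x_{k-1}$, and the map $x\mapsto(\lambda_k)$ is an isometric isomorphism $bv_p\to\ell_p$. The plan is to conjugate $U(s,r,s)$ through this isomorphism: I would compute the matrix that represents $U(s,r,s)$ acting on the difference coordinates, show it is again a banded operator bounded on $\ell_p$ by Stage~1 (or by re-applying Lemmas~\ref{Lem.1.3}--\ref{Lem.1.5} to the conjugated matrix), and conclude boundedness on $bv_p$. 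Because the conjugation is isometric, the operator norm on $bv_p$ equals the norm of the conjugated matrix on $\ell_p$, which again computes to $2|s|+|r|$.

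\textbf{Main obstacle.} The routine part is the row/column sum bookkeeping for $\ell_p$; the delicate point is Stage~2. Conjugating the tri-band matrix by the difference map does not obviously return a symmetric tri-band matrix, so I expect the conjugated operator to have a slightly wider band or a perturbed first row, and I would need to verify carefully that its row and column $\ell_1$-norms still have supremum exactly $2|s|+|r|$ so that the norm is preserved rather than merely bounded. Establishing the sharp lower bound $\|U(s,r,s)\|\ge 2|s|+|r|$ in the $bv_p$ norm — as opposed to a bound with a constant — is the step where care is most needed.
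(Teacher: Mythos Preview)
Your approach matches the paper's. Corollary~\ref{Cor.1.6} is stated there without proof as a consequence of Lemmas~\ref{Lem.1.3}--\ref{Lem.1.5}, which is precisely your Stage~1; the explicit arguments appear later as Theorems~\ref{Theo.2.1} and~\ref{Theo.3.1}. Your Stage~2 conjugation idea \emph{is} the computation in the paper's Theorem~\ref{Theo.3.1}: expanding
\[
(Ux)_k-(Ux)_{k-1}=s(x_{k-1}-x_{k-2})+r(x_k-x_{k-1})+s(x_{k+1}-x_k)
\]
shows that the conjugated matrix is $U(s,r,s)$ itself (with at most a single perturbed boundary entry), so your concern about the band widening dissolves and the row/column sums are again $2|s|+|r|$.

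Your caution about the sharp lower bound is well placed --- in fact the paper never supplies it. Theorem~\ref{Theo.2.1} proves only
\[
\bigl(|r|^p+2|s|^p\bigr)^{1/p}\le \|U(s,r,s)\|_{\ell_p}\le 2|s|+|r|,
\]
and Theorem~\ref{Theo.3.1} gives only the upper bound on $bv_p$; the exact equality asserted in Corollary~\ref{Cor.1.6} is stated but not actually established anywhere in the paper. Your test-vector idea (normalized truncations of the constant sequence, or of the alternating sequence when $r$ and $s$ have opposite signs) does yield the matching lower bound for real $r,s$, so carrying it out would make your argument more complete than the paper's.
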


\section{The Spectrum Of The Operator $U(s,r,s)$ On The Sequence Space $\ell
_{p}$, $(1<p<\infty )$.}

In this section, the fine spectrum of the second order difference operator $%
U(s,r,s)$ over the sequence space $\ell _{p}$, $(1<p<\infty )$ have been
examined. We begin with a theorem concerning the bounded linearity of the
operator $U(s,r,s)$ acting on the sequence space $\ell _{p}$, $(1<p<\infty
). $

\begin{theorem}
\label{Theo.2.1} $U(s,r,s):\ell _{p}\rightarrow \ell _{p}$ is a bounded
linear operator satisfying the inequalities 
\begin{equation*}
\left( |r|^{p}+2|s|^{p}\right) ^{\frac{1}{p}}\leq \left\Vert
U(s,r,s)\right\Vert _{\ell _{p}}\leq 2|s|+|r|.
\end{equation*}
\end{theorem}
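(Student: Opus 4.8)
The plan is to establish the two inequalities separately. For the lower bound, I would exhibit a specific test sequence that nearly achieves the ratio $(|r|^p + 2|s|^p)^{1/p}$, while for the upper bound I would invoke the boundedness already obtained in Corollary~\ref{Cor.1.6}.

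First I would prove the lower bound by choosing the standard basis vector $e^{(1)} = (1,0,0,\dots) \in \ell_p$, which satisfies $\|e^{(1)}\|_{\ell_p} = 1$. Applying the matrix $U(s,r,s)$ to $e^{(1)}$ reads off the first column of the matrix, giving $U(s,r,s)\,e^{(1)} = (r, s, 0, 0, \dots)$, whose $\ell_p$-norm is $(|r|^p + |s|^p)^{1/p}$. This alone does not reach the claimed bound, so instead I would use an interior basis vector $e^{(k)}$ with $k \geq 2$, whose image is a full interior column $(\dots, 0, s, r, s, 0, \dots)$ with the entry $r$ in position $k$ and $s$ in positions $k-1$ and $k+1$. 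The $\ell_p$-norm of this image is exactly $(|r|^p + 2|s|^p)^{1/p}$ while $\|e^{(k)}\|_{\ell_p} = 1$. By the definition of the operator norm,
\begin{equation*}
\left\Vert U(s,r,s)\right\Vert _{\ell_p} = \sup_{x\neq \theta} \frac{\left\Vert U(s,r,s)\,x\right\Vert_{\ell_p}}{\left\Vert x\right\Vert_{\ell_p}} \geq \frac{\left\Vert U(s,r,s)\,e^{(k)}\right\Vert_{\ell_p}}{\left\Vert e^{(k)}\right\Vert_{\ell_p}} = \left(|r|^p + 2|s|^p\right)^{1/p},
\end{equation*}
which gives the desired lower estimate.

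For the upper bound, the boundedness of $U(s,r,s)$ on $\ell_p$ follows directly from Corollary~\ref{Cor.1.6}, which already records $\|U(s,r,s)\|_{(\mu,\mu)} = 2|s| + |r|$ for $\mu = \ell_p$. Alternatively, to make the estimate self-contained, I would verify the upper bound by a direct triangle-inequality argument: writing $U(s,r,s)$ as the sum $s S^{-} + r I + s S^{+}$ of a forward shift, a scalar multiple of the identity, and a backward shift (each of which is a contraction or scalar on $\ell_p$), the subadditivity of the operator norm yields $\|U(s,r,s)\|_{\ell_p} \leq |s| + |r| + |s| = 2|s| + |r|$. Either route closes the chain of inequalities.

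The main obstacle, such as it is, lies in being careful about the lower bound: the naive choice $e^{(1)}$ yields only $(|r|^p + |s|^p)^{1/p}$ because the first column is truncated, so the argument genuinely needs an interior column where both off-diagonal entries $s$ appear. I expect no analytic difficulty beyond verifying that the two explicit column norms are computed correctly, since the upper bound is essentially a restatement of the already-proven boundedness.
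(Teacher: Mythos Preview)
Your proposal is correct and follows essentially the same route as the paper: a single interior standard basis vector for the lower bound and a Minkowski/triangle-inequality (shift-decomposition) estimate for the upper bound. The only cosmetic difference is indexing: the paper takes $\mathbb{N}=\{0,1,2,\dots\}$, so its $e^{(1)}=(0,1,0,\dots)$ is already the second basis vector and directly yields the column $(s,r,s,0,\dots)$ without the detour you describe; likewise, the paper writes out the Minkowski estimate explicitly rather than citing Corollary~\ref{Cor.1.6}, but the content is the same.
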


\begin{proof}
The linearity of $U(s,r,s)$ is trivial and so it is omitted. Let us take $%
e^{(1)}=(0,1,0,...)$ in $\ell _{p}.$ Then $U(s,r,s)e^{(1)}=(s,r,s,0,0,...)$
and observe that

\begin{equation*}
\left\Vert U(s,r,s)e^{(1)}\right\Vert _{\ell _{p}}=\left(
|r|^{p}+2|s|^{p}\right) ^{\frac{1}{p}}\leq \left\Vert U(s,r,s)\right\Vert
_{\ell _{p}}\left\Vert e^{(1)}\right\Vert _{\ell _{p}}
\end{equation*}%
which gives the fact that

\begin{equation}
\left( |r|^{p}+2|s|^{p}\right) ^{\frac{1}{p}}\leq \left\Vert
U(s,r,s)\right\Vert _{\ell _{p}}  \label{2.1}
\end{equation}%
for any $p>1$. Now take any $x=(x_{k})\in \ell _{p}$ such that $\left\Vert
x\right\Vert =1$ . Then, using Minkowski's inequality and taking $x_{-1}=0$,
we have

\begin{eqnarray*}
\left\Vert U(s,r,s)x\right\Vert _{\ell _{p}} &=&\left(
\sum\limits_{k=0}^{\infty }\left\vert sx_{k-1}+rx_{k}+sx_{k+1}\right\vert
^{p}\right) ^{\frac{1}{p}} \\
&\leq &\left( \sum\limits_{k=0}^{\infty }\left\vert sx_{k-1}\right\vert
^{p}\right) ^{\frac{1}{p}}+\left( \sum\limits_{k=0}^{\infty }\left\vert
rx_{k}\right\vert ^{p}\right) ^{\frac{1}{p}}+\left(
\sum\limits_{k=0}^{\infty }\left\vert sx_{k+1}\right\vert ^{p}\right) ^{%
\frac{1}{p}} \\
&=&\left( |r|+2|s|\right) \left\Vert x\right\Vert _{\ell _{p}},
\end{eqnarray*}%
which gives

\begin{equation}
\left\Vert U(s,r,s)\right\Vert _{\ell _{p}}\leq 2|s|+|r|.  \label{2.2}
\end{equation}%
Combining the inequalities (\ref{2.1}) and (\ref{2.2}) we complete the proof.
\end{proof}

\begin{theorem}
\label{Theo.2.2}$\sigma \left( U(s,r,s),\ell _{p}\right) =[r-2s,r+2s].$
\end{theorem}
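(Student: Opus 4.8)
The plan is to prove the two inclusions $\sigma(U(s,r,s),\ell_p)\subseteq[r-2s,r+2s]$ and $[r-2s,r+2s]\subseteq\sigma(U(s,r,s),\ell_p)$ separately, and the whole argument rests on a single structural observation about the recurrence that $(U(s,r,s)-\lambda I)x=y$ encodes. Writing this equation out componentwise gives the second order difference equation $sx_{k-1}+(r-\lambda)x_k+sx_{k+1}=y_k$, whose homogeneous part has characteristic polynomial $st^{2}+(r-\lambda)t+s$. Its two roots satisfy $t_1t_2=1$ and $t_1+t_2=(\lambda-r)/s$, so the product-one relation forces a dichotomy: when $|\lambda-r|\le 2|s|$, i.e.\ exactly when $\lambda\in[r-2s,r+2s]$, the discriminant is nonpositive and the roots form a conjugate pair on the unit circle ($|t_1|=|t_2|=1$); otherwise the discriminant is positive and one root lies strictly inside and the other strictly outside the unit circle. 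This dichotomy is precisely what separates the resolvent set from the spectrum.

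For the inclusion $\sigma\subseteq[r-2s,r+2s]$ I would fix $\lambda\notin[r-2s,r+2s]$ and exhibit a bounded inverse of $U(s,r,s)-\lambda I$. Letting $t_1$ denote the root with $|t_1|<1$, I would solve the recurrence by a Green's function (variation of parameters) construction, producing an explicit infinite matrix whose $(n,k)$ entry decays geometrically, on the order of $|t_1|^{|n-k|}$, once one incorporates the correction forced by the boundary row $(r-\lambda)x_0+sx_1=y_0$. Because the row and column $\ell_1$-norms of such a matrix are dominated by the convergent geometric series $\sum_j|t_1|^{j}$, Lemma \ref{Lem.1.3} and Lemma \ref{Lem.1.4} show it is bounded on $\ell_1$ and on $\ell_\infty$, and Lemma \ref{Lem.1.5} then yields boundedness on $\ell_p$. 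It remains to check that $U(s,r,s)-\lambda I$ is injective, which it is: any $\ell_p$ homogeneous solution must be built from the decaying root $t_1$ alone, yet the boundary relation then pins it to zero; and that the constructed matrix is a genuine two-sided inverse defined on all of $\ell_p$. Together these give $\lambda\in\rho(U(s,r,s))$.

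For the reverse inclusion $[r-2s,r+2s]\subseteq\sigma$, I would use that $\sigma$ is closed and argue that every $\lambda$ with $|\lambda-r|\le 2|s|$ fails to be a regular value. Here the characteristic roots sit on the unit circle, the homogeneous solutions are purely oscillatory, and the formal inverse loses its geometric decay; I expect to make this quantitative by constructing a normalized Weyl sequence. Concretely, writing $\lambda=r+2s\cos\theta$ and truncating the oscillatory solution $x_k=\sin((k+1)\theta)$, after normalization in $\ell_p$ one should obtain $\|(U(s,r,s)-\lambda I)x^{(N)}\|_{\ell_p}/\|x^{(N)}\|_{\ell_p}\to 0$, so that $U(s,r,s)-\lambda I$ is not bounded below and hence $\lambda\in\sigma(U(s,r,s),\ell_p)$; the endpoints are then captured by closedness of the spectrum.

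The main obstacle I anticipate lies in the first inclusion: converting the heuristic ``geometrically decaying Green's function'' into a rigorous two-sided bounded inverse on $\ell_p$, since the semi-infinite boundary row perturbs the clean Toeplitz formula and one must verify both that the range exhausts $\ell_p$ and that injectivity holds. The Weyl-sequence estimate in the second inclusion is the other delicate point, because the $\ell_p$ norm of the truncated oscillatory vector has to be controlled carefully as $N\to\infty$.
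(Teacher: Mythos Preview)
Your plan is correct and, for the inclusion $\sigma\subseteq[r-2s,r+2s]$, it coincides with the paper's argument: both solve the recurrence by an explicit Green's function with entries decaying like $|t_1|^{|n-k|}$, bound the resulting matrix on $\ell_1$ and $\ell_\infty$ via Lemmas~\ref{Lem.1.3} and~\ref{Lem.1.4}, and interpolate to $\ell_p$ by Lemma~\ref{Lem.1.5}. Your extra care about injectivity and the semi-infinite boundary row is well placed; the paper simply records the explicit inverse matrix without isolating these points.

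The genuine difference lies in the inclusion $[r-2s,r+2s]\subseteq\sigma$. The paper does \emph{not} use a Weyl sequence: it argues that when the roots satisfy $|\alpha_1|=|\alpha_2|=1$ the formal solution of $(U(s,r,s)-\lambda I)x=y$ for the concrete right-hand side $y=(1,0,0,\dots)\in\ell_p$ produces an $x\notin\ell_p$, so the inverse cannot be a bounded operator on $\ell_p$ (with the case $\lambda=r$ handled separately). Your approximate-eigenvector approach---truncating $x_k=\sin((k+1)\theta)$ and showing $\|(U(s,r,s)-\lambda I)x^{(N)}\|_{\ell_p}/\|x^{(N)}\|_{\ell_p}\to 0$---is an equally valid and arguably cleaner route: it shows directly that $U(s,r,s)-\lambda I$ is not bounded below (hence $\lambda\in\sigma$), works uniformly for every interior $\theta\in(0,\pi)$ and all $1<p<\infty$ at once, and then picks up the endpoints by closedness, avoiding any special treatment of $\lambda=r$. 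The paper's method is slightly more elementary (a single explicit counterexample rather than an asymptotic estimate) but requires knowing the inverse formula from the first half; your method is self-contained for this inclusion.
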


\begin{proof}
First, we prove that $\left( U(s,r,s)-\lambda I\right) ^{-1}$ exists and in $%
B(\ell _{p})$ for%
\begin{equation*}
\lambda \notin \left\{ \lambda \in C:\lambda =r+2s.\cos \theta ,\theta \in
\lbrack 0,2\pi ]\right\}
\end{equation*}%
and next that the operator $\left( U(s,r,s)-\lambda I\right) $ is not
invertible for%
\begin{equation*}
\lambda \in \ \left\{ \lambda \in C:\lambda =r+2s.\cos \theta ,\theta \in
\lbrack 0,2\pi ]\right\} .
\end{equation*}%
Let $\lambda \notin \sigma \left( U(s,r,s),\ell _{p}\right) $. Let $\alpha
_{1}$ and $\alpha _{2}$ be the roots of the polynomial%
\begin{equation*}
P(x)=sx^{2}+(r-\lambda )x+s,\text{with }\left\vert \alpha _{2}\right\vert
>1>\left\vert \alpha _{1}\right\vert .
\end{equation*}%
Solving the system of equations%
\begin{eqnarray}
(r-\lambda )x_{1}+sx_{2} &=&y_{1}  \label{2.33} \\
sx_{1}+(r-\lambda )x_{2}+sx_{3} &=&y_{2}  \notag \\
sx_{2}+(r-\lambda )x_{3}+sx_{4} &=&y_{3}  \notag \\
&&\cdots  \notag
\end{eqnarray}%
for $x=(x_{k})$ in terms of $y=(y_{k})$ gives the matrix of $\left(
U(s,r,s)-\lambda I\right) ^{-1}.$ This is a non-homogenous linear recurrence
relation. Using the fact that $x,y\in \ell _{p}$, for (\ref{2.33}) we can
reach to a solution with generating functions (see \cite%
{Graham-Knuth-Patashnik}). This solution can be given by%
\begin{equation}
x_{k}=\frac{1}{s(\alpha _{1}^{2}-1)}\dsum\limits_{n=0}^{\infty }t_{kn}y_{n},
\label{2.4}
\end{equation}%
where%
\begin{equation*}
t_{kn}=\left\{ 
\begin{array}{cc}
\alpha _{1}^{k+1-n}-\alpha _{1}^{k+3-n}; & \text{if }k\geq n, \\ 
\alpha _{1}^{n+1-k}-\alpha _{1}^{n+3-k}; & \text{if }k<n.%
\end{array}%
\right. .
\end{equation*}%
{\small Thus, we obtain that%
\begin{equation*}
\left\Vert \left( U(s,r,s)-\lambda I\right) ^{-1}\right\Vert _{\left( \ell
_{1},\ell _{1}\right) }=\sup_{k}\sum\limits_{n=k}^{\infty }|x_{k}|\leq
\sup_{k}\left( \left\vert \alpha _{1}\right\vert +\left\vert \alpha
_{1}\right\vert ^{2k+3}\right) \dsum\limits_{n=0}^{k}\left\vert \alpha
_{1}\right\vert ^{n}<\infty ,
\end{equation*}%
}i.e. $\left( U(s,r,s)-\lambda I\right) ^{-1}\in \left( \ell _{1},\ell
_{1}\right) .$ {\small Similarly%
\begin{equation*}
\left\Vert \left( U(s,r,s)-\lambda I\right) ^{-1}\right\Vert _{\left( \ell
_{\infty },\ell _{\infty }\right) }<\infty .
\end{equation*}%
}By Lemma \ref{Lem.1.5}, we have $\left( U(s,r,s)-\lambda I\right) ^{-1}\in
(l_{p},l_{p})$. This shows that $\sigma \left( U(s,r,s),\ell _{p}\right)
\subseteq \left\{ \lambda \in C:\lambda =r+2s.\cos \theta ,\theta \in
\lbrack 0,2\pi ]\right\} .$

Let $\lambda \in \sigma \left( U(s,r,s),\ell _{p}\right) $ and $\lambda \neq
r$. Then $\left( U(s,r,s)-\lambda I\right) ^{-1}$ exists but $%
y=(1,0,0,...)\in l_{p}$ and $x=\left( x_{k}\right) $ not in $l_{p}$ , hence $%
\left\vert \alpha _{2}\right\vert >1>\left\vert \alpha _{1}\right\vert $ is
not satisfied, i.e. $\left( U(s,r,s)-\lambda I\right) ^{-1}$ is not in $%
B(\ell _{p}).$ If $\lambda =r$, then $U(s,r,s)-\lambda I=U(s,0,s)$. Since $%
U(s,0,s)x=\theta $ implies $x\neq \theta =(0,0,0,...)$, $U(s,r,s):\ell
_{p}\rightarrow \ell _{p}$ is not invertible. This shows that $\left\{
\lambda \in C:\lambda =r+2s.\cos \theta ,\theta \in \lbrack 0,2\pi ]\right\}
\subseteq \sigma \left( U(s,r,s),\ell _{p}\right) .$ On the other hand, $%
\alpha _{1}.\alpha _{2}=1$, $\left\vert \alpha _{2}\right\vert >1>\left\vert
\alpha _{1}\right\vert $ is not satisfied means, the roots can be only of
the form%
\begin{equation*}
\alpha _{1}=\frac{1}{\alpha _{2}}=e^{i\theta }
\end{equation*}%
for some $\theta \in \lbrack 0,2\pi )$. Then $\frac{\lambda -r}{s}=\alpha
_{1}+\alpha _{2}=e^{i\theta }+e^{-i\theta }=2\cos \theta $. Hence $\lambda
=r+2s.\cos \theta $, which means $\lambda $ can be only on the line segment $%
\left[ r-2s,r+2s\right] $. This completes the proof.
\end{proof}

We should remark that the index $p$ has different meanings in the notation
of the spaces $\ell _{p}$, $\ell _{p}^{\ast }\simeq \ell _{q}$ with $%
p^{-1}+q^{-1}=1$ and the point spectrums $\sigma _{p}\left( U(s,r,s),\ell
_{p}\right) $, $\sigma _{p}\left( U^{\ast }(s,r,s),\ell _{q}\right) $ which
occur in the following theorems.

\begin{theorem}
\label{Theo.2.3} $\sigma _{p}\left( U(s,r,s),\ell _{p}\right) =\varnothing .$
\end{theorem}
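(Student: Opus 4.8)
The plan is to show that $U(s,r,s)-\lambda I$ is injective on $\ell_p$ for every $\lambda \in \mathbb{C}$, which means no eigenvalues exist and hence $\sigma_p(U(s,r,s),\ell_p)=\varnothing$. The point spectrum consists exactly of those $\lambda$ for which the homogeneous equation $(U(s,r,s)-\lambda I)x=\theta$ admits a nonzero solution $x\in\ell_p$, so the entire argument reduces to analyzing this recurrence.

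First I would write out the eigenvalue equation $(U(s,r,s)-\lambda I)x=\theta$ explicitly. With the convention $x_{-1}=0$ this gives $(r-\lambda)x_0+sx_1=0$ from the first row, and $sx_{k-1}+(r-\lambda)x_k+sx_{k+1}=0$ for $k\geq 1$. This is precisely the homogeneous second-order linear recurrence whose characteristic polynomial is $P(x)=sx^2+(r-\lambda)x+s$, the same polynomial introduced in the proof of Theorem~\ref{Theo.2.2}, with roots $\alpha_1,\alpha_2$ satisfying $\alpha_1\alpha_2=1$.

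The key step is to solve the recurrence and test membership in $\ell_p$. The general solution has the form $x_k=A\alpha_1^{k}+B\alpha_2^{k}$ (with the usual modification if $\alpha_1=\alpha_2$), and the boundary condition coming from the first row fixes the ratio of $A$ to $B$. Because $\alpha_1\alpha_2=1$, the two roots are reciprocal, so at least one of them has modulus $\geq 1$; in every case the resulting nonzero solution fails to be $p$-summable. I would split into cases: when the roots are distinct with $\lvert\alpha_2\rvert>1>\lvert\alpha_1\rvert$, the coefficient of the growing mode $\alpha_2^k$ cannot be forced to zero while keeping $x$ nontrivial, so $x\notin\ell_p$; when $\lvert\alpha_1\rvert=\lvert\alpha_2\rvert=1$ (the case $\lambda=r+2s\cos\theta$ on the spectrum), the solution is a bounded oscillatory sequence that is not $p$-summable; and the degenerate double-root case is handled similarly. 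In each situation the only sequence in $\ell_p$ solving the system is the zero sequence.

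The main obstacle I expect is the careful bookkeeping of the first-row boundary condition and confirming that it genuinely forbids suppressing the large-modulus mode. One must verify that the single linear constraint from the top row is incompatible with setting $B=0$ unless $x\equiv\theta$; this is where a nonzero eigenvector would have to hide, and ruling it out rigorously—rather than waving at the growth of $\alpha_2^k$—is the delicate point. Once that incompatibility is established in all cases, injectivity of $U(s,r,s)-\lambda I$ follows for every $\lambda$, giving the emptiness of the point spectrum.
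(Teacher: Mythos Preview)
Your proposal is correct and follows essentially the same route as the paper: write the eigenvalue system as a second-order linear recurrence with reciprocal characteristic roots $\alpha_1\alpha_2=1$, then argue case by case that no nontrivial solution lies in $\ell_p$. The paper splits into $|q|=2$ (double root, giving $x_n=\pm n x_1$) and $|q|\neq 2$ (distinct roots), and handles your ``delicate point'' by computing the coefficients from the initial condition $qx_1+x_2=0$ to obtain the closed form $x_n=\dfrac{\alpha_2^{\,n}-\alpha_1^{\,n}}{\alpha_2-\alpha_1}\,x_1$, which makes the nonvanishing of the large-modulus mode explicit.
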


\begin{proof}
Let $\lambda $ be an eigenvalue of the operator $U(s,r,s)$. An eigenvector $%
x=(x_{1},x_{2},...)\in \ell _{p}$ corresponding to this eigenvalue satisfies
the lineer system of equations%
\begin{eqnarray}
rx_{1}+sx_{2} &=&\lambda x_{1} \\
sx_{1}+rx_{2}+sx_{3} &=&\lambda x_{2}  \notag \\
sx_{2}+rx_{3}+sx_{4} &=&\lambda x_{3}  \notag \\
&&\cdots  \notag
\end{eqnarray}

If $x_{1}=0$, then $x_{k}=0$ for all $k\in \mathbb{N}$. Hence $x_{1}\neq 0$.
Then the system of equations turn into the linear homogeneous recurrence
relation%
\begin{equation*}
x_{k+2}+qx_{k+1}+x_{k}=0\text{, for }k\geq 1\text{,}
\end{equation*}

where $q=\frac{r-\lambda }{s}$. The characteristic polynomial of the
recurrence relation is%
\begin{equation*}
x^{2}+qx+1=0.
\end{equation*}

There are two cases here.

\textbf{Case 1.} $\left\vert q\right\vert =2.$

Then characteristic polynomial has one root: $\alpha =\left\{ 
\begin{array}{cc}
1, & \text{if }q=-2 \\ 
-1, & \text{if }q=2%
\end{array}%
\right. .$ Hence, the solution of the recurrence relation is of the form%
\begin{equation*}
x_{n}=\left\{ 
\begin{array}{cc}
nx_{1}\text{ \ \ }; & \text{if }q=-2, \\ 
\left( -1\right) ^{n+1}nx_{1}\text{ \ }; & \text{if }q=2.%
\end{array}%
\right.
\end{equation*}

This means $\left( x_{n}\right) \notin \ell _{p}$. So, we conclude that
there is no eigenvalue in this case.

\textbf{Case 2.} $\left\vert q\right\vert \neq 2.$

Then characteristic polynomial has two distinct roots $\left\vert \alpha
_{1}\right\vert \neq 1$ and $\left\vert \alpha _{2}\right\vert \neq 1$ with $%
\alpha _{1}.\alpha _{2}=1$. Let $\left\vert \alpha _{2}\right\vert
>1>\left\vert \alpha _{1}\right\vert $. The solution of the recurrence
relation is of the form%
\begin{equation*}
x_{n}=A\left( \alpha _{2}\right) ^{n}+B\left( \alpha _{1}\right) ^{n}\text{.}
\end{equation*}

Using the fact that $qx_{1}+x_{2}=0$, we get $A=\frac{1}{\alpha _{2}-\alpha
_{1}}x_{1}$, $B=\frac{1}{\alpha _{1}-\alpha _{2}}x_{1}$. So we have%
\begin{equation*}
x_{n}=\frac{\left( \alpha _{2}\right) ^{n}-\left( \alpha _{1}\right) ^{n}}{%
\alpha _{2}-\alpha _{1}}x_{1}\text{.}
\end{equation*}

Again we have $\left( x_{n}\right) \notin \ell _{p}$. Hence there is no
eigenvalue also in this case.
\end{proof}

\begin{theorem}
\label{Theo.2.4} $\sigma _{p}\left( U^{\ast }(s,r,s),\ell _{p}^{\ast
}\right) =\varnothing .$
\end{theorem}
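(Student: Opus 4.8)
The plan is to compute the adjoint operator $U^{\ast}(s,r,s)$ acting on $\ell_p^{\ast}\simeq\ell_q$ (with $p^{-1}+q^{-1}=1$) and to show it has no eigenvalues. Since $U(s,r,s)$ is represented by the symmetric tri-band matrix, its adjoint is represented by the transpose of that matrix. The key observation is that $U(s,r,s)$ is \emph{symmetric} as a matrix, so its transpose equals itself; hence $U^{\ast}(s,r,s)$ is again given by the same tri-band matrix $U(s,r,s)$, now viewed as an operator on $\ell_q$. This reduces the problem almost entirely to the eigenvalue analysis already carried out in Theorem \ref{Theo.2.3}, the only difference being the underlying space $\ell_q$ in place of $\ell_p$.

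First I would set up the eigenvalue equation $U^{\ast}(s,r,s)x=\lambda x$ for $x=(x_k)\in\ell_q$. Because the matrix is symmetric, this is literally the same linear system
\begin{equation*}
rx_{1}+sx_{2}=\lambda x_{1},\qquad sx_{k-1}+rx_{k}+sx_{k+1}=\lambda x_{k}\quad(k\geq 2),
\end{equation*}
that appears in the proof of Theorem \ref{Theo.2.3}. I would then argue, exactly as before, that $x_1=0$ forces $x_k=0$ for all $k$, so any eigenvector has $x_1\neq 0$, and the recurrence reduces to $x_{k+2}+qx_{k+1}+x_k=0$ with $q=(r-\lambda)/s$ and characteristic equation $x^2+qx+1=0$.

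Next I would split into the same two cases. When $|q|=2$ the single root is $\pm 1$ and the solution grows like $n$, giving $x_n=nx_1$ (up to sign), which is not in $\ell_q$. When $|q|\neq 2$ there are two distinct roots $\alpha_1,\alpha_2$ with $\alpha_1\alpha_2=1$, so one of them has modulus strictly greater than $1$; the boundary condition $qx_1+x_2=0$ yields
\begin{equation*}
x_n=\frac{\alpha_2^{\,n}-\alpha_1^{\,n}}{\alpha_2-\alpha_1}\,x_1,
\end{equation*}
whose dominant term $\alpha_2^{\,n}$ grows geometrically, so again $(x_n)\notin\ell_q$. In either case no nonzero solution lies in $\ell_q$, whence $\sigma_p(U^{\ast}(s,r,s),\ell_p^{\ast})=\varnothing$.

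The only genuine point that needs care is the identification of the adjoint matrix: I must justify that the transpose of $U(s,r,s)$ coincides with $U(s,r,s)$ itself, which is immediate from symmetry of the band structure, and that this transpose indeed represents the adjoint on the dual $\ell_p^{\ast}\simeq\ell_q$. Once this is in place, the membership computations are the $\ell_q$-analogues of those in Theorem \ref{Theo.2.3}, and since the divergence arguments ($x_n\sim n$ or $x_n\sim\alpha_2^{\,n}$) are insensitive to whether the summability exponent is $p$ or $q$, no new estimate is required. Thus the main obstacle is purely bookkeeping about the adjoint, not analysis; the eigenvalue count transfers verbatim.
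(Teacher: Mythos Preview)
Your proposal is correct and follows exactly the paper's approach: observe that $U^{\ast}(s,r,s)=U^{t}(s,r,s)=U(s,r,s)$ by symmetry of the tri-band matrix, and then invoke the eigenvalue analysis of Theorem~\ref{Theo.2.3} on the dual space $\ell_p^{\ast}\simeq\ell_q$. The paper's own proof is a one-liner citing precisely this symmetry and Theorem~\ref{Theo.2.3}; you have simply unpacked the recurrence argument explicitly and noted (correctly) that the divergence of $(x_n)$ is insensitive to whether the summability exponent is $p$ or $q$, so Theorem~\ref{Theo.2.3} applies verbatim with $q$ in place of $p$.
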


\begin{proof}
Since $U^{\ast }(s,r,s)=U^{t}(s,r,s)=U(s,r,s)$ from the Theorem \ref%
{Theo.2.3}, the proof is obtained easily.
\end{proof}

\begin{corollary}
\label{Cor.2.5} $\sigma _{r}\left( U(s,r,s),\ell _{p}\right) =\varnothing $.
\end{corollary}

\begin{theorem}
\label{Theo.2.6} $\sigma _{c}\left( U(s,r,s),\ell _{p}\right) =\left[
r-2s,r+2s\right] $
\end{theorem}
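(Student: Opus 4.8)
The plan is to obtain this identity directly from the disjoint decomposition of the spectrum recalled in the preliminaries: for any bounded linear operator $T$ the spectrum splits as the union of three pairwise disjoint sets,
\begin{equation*}
\sigma(T) = \sigma_p(T) \cup \sigma_c(T) \cup \sigma_r(T).
\end{equation*}
Applied to $T = U(s,r,s)$ acting on $\ell_p$, this tells us that $\sigma_c$ is exactly what is left after removing the point and residual spectra from the full spectrum, so the theorem requires no new analysis beyond assembling the facts already established.

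First I would gather the three ingredients in hand, all for the operator $U(s,r,s)$ on $\ell_p$: Theorem \ref{Theo.2.2} gives the full spectrum $\sigma = [r-2s, r+2s]$; Theorem \ref{Theo.2.3} gives $\sigma_p = \varnothing$; and Corollary \ref{Cor.2.5} gives $\sigma_r = \varnothing$. Using disjointness of the partition and substituting, one obtains
\begin{equation*}
\sigma_c = \sigma \setminus (\sigma_p \cup \sigma_r) = [r-2s, r+2s] \setminus \varnothing = [r-2s, r+2s],
\end{equation*}
which is precisely the claim.

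Since the final step is a one-line set subtraction, there is no obstacle internal to this statement; all the substantive work is carried by the earlier results, and the role of this theorem is simply to record the conclusion. The only subtlety worth flagging is that it depends essentially on the emptiness of the residual spectrum in Corollary \ref{Cor.2.5}, which is not self-contained: it rests on Theorem \ref{Theo.2.4} via the standard principle that any $\lambda \in \sigma_r(T)$ must be an eigenvalue of the adjoint $T^{\ast}$. As $U(s,r,s)$ is symmetric and equals its transpose, the empty point spectrum of the adjoint forces $\sigma_r = \varnothing$, after which the continuous spectrum is compelled to absorb the entire interval.
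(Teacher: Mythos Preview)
Your proof is correct and follows exactly the same route as the paper: invoke the disjoint partition $\sigma = \sigma_p \cup \sigma_c \cup \sigma_r$, plug in $\sigma = [r-2s, r+2s]$ from Theorem~\ref{Theo.2.2} together with $\sigma_p = \sigma_r = \varnothing$ from Theorem~\ref{Theo.2.3} and Corollary~\ref{Cor.2.5}, and read off $\sigma_c$. Your added remark tracing Corollary~\ref{Cor.2.5} back to Theorem~\ref{Theo.2.4} via the adjoint is accurate and a nice clarification, but it goes slightly beyond what the paper itself spells out here.
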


\begin{proof}
Since $\sigma _{p}\left( U(s,r,s),\ell _{p}\right) =\sigma _{r}\left(
U(s,r,s),\ell _{p}\right) =\varnothing $, $\sigma \left( U(s,r,s),\ell
_{p}\right) $ is the disjoint union of the parts $\sigma _{p}\left(
U(s,r,s),\ell _{p}\right) $, $\sigma _{r}\left( U(s,r,s),\ell _{p}\right) $
and $\sigma _{r}\left( U(s,r,s),\ell _{p}\right) $, we have $\sigma
_{c}\left( U(s,r,s),\ell _{p}\right) =\left[ r-2s,r+2s\right] .$
\end{proof}

\section{The Spectrum Of The Operator $U(s,r,s)$ On The Sequence Space $%
bv_{p}$, $(1<p<\infty )$}

In this section, the fine spectrum of the second order difference operator $%
U(s,r,s)$ over the sequence space $bv_{p}$, $(1<p<\infty )$ have been
examined. We begin with a theorem concerning the bounded linearity of the
operator $U(s,r,s)$ acting on the sequence space $bv_{p}$, $(1<p<\infty )$.

\begin{theorem}
\label{Theo.3.1} $U(s,r,s)\in B(bv_{p}).$
\end{theorem}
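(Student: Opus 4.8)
The goal is to show that the operator $U(s,r,s)$ maps $bv_p$ into itself boundedly. The plan is to reduce this problem to the already-established boundedness on $\ell_p$ by exploiting the structure of the $bv_p$ norm. Recall that $\|x\|_{bv_p} = \left(\sum_k |x_k - x_{k+1}|^p\right)^{1/p}$, so that if we define the forward-difference map $\Delta: bv_p \to \ell_p$ by $(\Delta x)_k = x_k - x_{k+1}$, then $\Delta$ is an isometric isomorphism onto $\ell_p$ (with the convention fixing the tail behaviour). The key observation is that $U(s,r,s)$ commutes with $\Delta$ in the sense that $\Delta\, U(s,r,s) = U(s,r,s)\, \Delta$, because $U(s,r,s)$ is a constant-coefficient band operator and hence commutes with the shift, and therefore with $\Delta$.

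First I would make the commutation relation precise. Writing $U = U(s,r,s)$ as the symmetric tri-band operator with constant entries $s,r,s$, a direct computation gives, for each index $k$,
\begin{equation*}
\big(\Delta(Ux)\big)_k = (Ux)_k - (Ux)_{k+1} = s(x_{k-1}-x_k) + r(x_k - x_{k+1}) + s(x_{k+1}-x_{k+2}),
\end{equation*}
which is exactly $\big(U(\Delta x)\big)_k$ since $(\Delta x)_j = x_j - x_{j+1}$. Thus on the level of sequences $U$ intertwines $\Delta$. Consequently, for any $x \in bv_p$,
\begin{equation*}
\|Ux\|_{bv_p} = \|\Delta(Ux)\|_{\ell_p} = \|U(\Delta x)\|_{\ell_p} \leq \|U\|_{(\ell_p,\ell_p)} \, \|\Delta x\|_{\ell_p} = \|U\|_{(\ell_p,\ell_p)} \, \|x\|_{bv_p}.
\end{equation*}
Since $U(s,r,s): \ell_p \to \ell_p$ is bounded by Theorem \ref{Theo.2.1} (with $\|U\|_{(\ell_p,\ell_p)} \leq 2|s|+|r|$), this shows $Ux \in bv_p$ and that $U$ is bounded on $bv_p$, which is the claim.

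Alternatively, one can invoke Corollary \ref{Cor.1.6} directly, which already asserts that $U(s,r,s): bv_p \to bv_p$ is a bounded linear operator with $\|U(s,r,s)\|_{(bv_p,bv_p)} = 2|s|+|r|$; in that case the theorem is an immediate restatement and the proof reduces to citing the corollary together with the $(\ell_\infty,\ell_\infty)\cap(\ell_1,\ell_1)$ criterion of Lemma \ref{Lem.1.5}. I would prefer the intertwining argument, however, since it is self-contained and transparently explains \emph{why} boundedness transfers from $\ell_p$ to $bv_p$.

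The main obstacle I anticipate is handling the boundary term and the convention at the first coordinate carefully: the operator $U$ has the asymmetric top row $(r,s,0,\dots)$, so the intertwining identity must be checked at $k=0$ (or $k=1$, depending on indexing) with the correct convention $x_{-1}=0$, exactly as in the proof of Theorem \ref{Theo.2.1}. One must verify that $\Delta$ genuinely maps $bv_p$ isometrically onto all of $\ell_p$ and that no finite-rank discrepancy at the edge spoils the identity $\Delta U = U \Delta$; this is a routine but essential bookkeeping check. Once the edge case is confirmed, the norm estimate is immediate from the $\ell_p$ bound.
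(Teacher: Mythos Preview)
Your proposal is correct and takes essentially the same approach as the paper. The paper computes $\|U(s,r,s)x\|_{bv_p}$ by directly expanding $(Ux)_k-(Ux)_{k-1}=s(x_{k-1}-x_{k-2})+r(x_k-x_{k-1})+s(x_{k+1}-x_k)$ and applying Minkowski, which is exactly your intertwining identity $\Delta U=U\Delta$ followed by the $\ell_p$ estimate; you merely package the last step via Theorem~\ref{Theo.2.1} instead of redoing Minkowski, and your remark about the edge convention $x_{-k}=0$ matches the paper's own caveat.
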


\begin{proof}
The linearity of the operator $U(s,r,s)$ is trivial and so it is omitted.
Let us take any $x=(x_{k})\in bv_{p}.$ Then, using Minkowski's inequality
and taking the negative indices $x_{-k}=0$, we have%
\begin{eqnarray*}
\left\Vert U(s,r,s)x\right\Vert _{bv_{p}} &=&\left(
\sum\limits_{k=0}^{\infty }\left\vert
sx_{k-1}+rx_{k}+sx_{k+1}-(sx_{k-2}+rx_{k-1}+sx_{k})\right\vert ^{p}\right) ^{%
\frac{1}{p}} \\
&\leq &\left( 2|s|+|r|\right) \left\Vert x\right\Vert _{bv_{p}}.
\end{eqnarray*}
\end{proof}

\begin{theorem}
\label{Theo.3.2} $\sigma (U(s,r,s),bv_{p})=\left[ r-2s,r+2s\right] .$
\end{theorem}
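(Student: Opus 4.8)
The plan is to follow the architecture of the proof of Theorem~\ref{Theo.2.2} almost verbatim, the only genuinely new point being that boundedness must be verified in the $bv_p$ norm rather than the $\ell_p$ norm. The bridge I would use is the first--difference isomorphism: writing $(\Delta x)_k=x_k-x_{k-1}$ (with $x_{-1}=0$), Lemma~\ref{Lem.1.2} shows that $\Delta$ carries $bv_p$ isometrically onto $\ell_p$, i.e. $\|x\|_{bv_p}=\|\Delta x\|_{\ell_p}$. I would establish the two inclusions $\sigma(U(s,r,s),bv_p)\subseteq[r-2s,r+2s]$ and $[r-2s,r+2s]\subseteq\sigma(U(s,r,s),bv_p)$ separately, exactly as before.

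For the inclusion $\subseteq$, fix $\lambda$ off the segment. The quadratic $P(x)=sx^2+(r-\lambda)x+s$ has the same roots $\alpha_1,\alpha_2$ with $\alpha_1\alpha_2=1$ and $|\alpha_2|>1>|\alpha_1|$ as in Theorem~\ref{Theo.2.2}, so the resolvent $B:=(U(s,r,s)-\lambda I)^{-1}$ is represented by the very same matrix $(t_{kn})$ of~(\ref{2.4}), whose entries decay geometrically like $|\alpha_1|^{|k-n|}$. To see that $B$ is bounded on $bv_p$ I would estimate $\|By\|_{bv_p}=\|\Delta By\|_{\ell_p}$. Since $U(s,r,s)$ is a truncated Toeplitz matrix, it commutes with $\Delta$ up to a rank--one corner defect $E=US-SU$, where $(Sx)_k=x_{k-1}$; concretely $E=s\,e^{(0)}(e^{(0)})^{t}$. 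Conjugating the identity $\Delta(U-\lambda I)=(U-\lambda I)\Delta+E$ by $B$ gives $\Delta B=B\Delta-BEB$, whence $\|\Delta By\|_{\ell_p}\le\|B\|_{(\ell_p,\ell_p)}\|\Delta y\|_{\ell_p}+|s|\,|(By)_0|\,\|Be^{(0)}\|_{\ell_p}$. The first term is $\le\|B\|_{(\ell_p,\ell_p)}\|y\|_{bv_p}$ by Theorem~\ref{Theo.2.2}, and the functional $y\mapsto(By)_0=\sum_n B_{0n}y_n$ is $bv_p$--continuous because its coefficient sequence decays geometrically and hence lies in $D_q\cong bv_p^{\ast}$ (Lemma~\ref{Lem.1.1}). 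This yields $B\in B(bv_p)$.

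For the reverse inclusion I would show that $U(s,r,s)-\lambda I$ fails to be boundedly invertible on $bv_p$ for every $\lambda\in[r-2s,r+2s]$. On the open segment the roots satisfy $|\alpha_1|=|\alpha_2|=1$, say $\alpha_1=e^{i\theta}=\overline{\alpha_2}$; solving $(U(s,r,s)-\lambda I)x=e^{(0)}$ (and noting $e^{(0)}\in bv_p$) produces through the recurrence a sequence $x_k$ that is a nontrivial combination of $e^{\pm ik\theta}$ and hence does not decay, so its successive differences $x_k-x_{k-1}$ do not lie in $\ell_p$; thus $x\notin bv_p$ and $U(s,r,s)-\lambda I$ is not onto $bv_p$. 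At the two endpoints $\lambda=r\pm2s$ the characteristic root is double ($\pm1$), the solution grows linearly, and the same conclusion holds. Combining the two inclusions finishes the proof.

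I expect the main obstacle to be precisely the boundedness of the resolvent on $bv_p$: unlike the $\ell_p$ case one cannot simply quote Lemma~\ref{Lem.1.5}, because $U(s,r,s)$ does \emph{not} commute with $\Delta$ at the boundary and $bv_p$ is not contained in $\ell_p$. The crux is therefore to show that the rank--one corner defect $BEB$ is harmless, which is exactly where the identification $bv_p^{\ast}\cong D_q$ of Lemma~\ref{Lem.1.1} does the real work, by certifying that the single boundary functional $y\mapsto(By)_0$ is continuous on $bv_p$. Everything else is a transcription of the $\ell_p$ argument through the isometry $\Delta$.
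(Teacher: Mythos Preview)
Your forward inclusion follows the paper's line---transport the $\ell_p$ resolvent through the first-difference isometry $\Delta:bv_p\to\ell_p$---and in fact you are more scrupulous than the paper, which simply asserts $x_k-x_{k-1}=(U(s,r,s)-\lambda I)^{-1}(y_k-y_{k-1})$ as if $\Delta$ and $U$ commuted. Your rank-one defect $E=s\,e^{(0)}(e^{(0)})^t$ is the honest correction, and your use of $bv_p^{\ast}\cong D_q$ to bound the boundary functional $y\mapsto (By)_0$ is the right way to absorb it.

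There is, however, a genuine gap in your reverse inclusion at the endpoint $\lambda=r+2s$. With a double root $\alpha=1$ the general solution of the homogeneous recurrence is $x_k=A+Bk$, so ``the solution grows linearly'' only when $B\neq 0$; but the single boundary equation $(r-\lambda)x_0+sx_1=1$ leaves one free parameter, and choosing $B=0$, $A=-1/s$ gives the constant sequence $x=(-1/s,-1/s,\dots)\in bv_p$, which \emph{does} solve $(U(s,r,s)-\lambda I)x=e^{(0)}$. Hence $e^{(0)}$ fails to witness non-surjectivity at $\lambda=r+2s$. (At $\lambda=r-2s$ your argument survives, because for the root $-1$ the difference $x_k-x_{k-1}=(-1)^k(2A+(2k-1)B)$ is in $\ell_p$ only if $A=B=0$.) The easiest repair is to observe that the spectrum is closed, so once you have the open segment $(r-2s,r+2s)$ the endpoints come for free; alternatively, test with $y=(1,1,1,\dots)\in bv_p$ at $\lambda=r+2s$, where the forced recurrence $s(x_{k+1}-2x_k+x_{k-1})=1$ makes $\Delta x$ grow linearly for \emph{every} choice of constants, so no preimage lies in $bv_p$.
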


\begin{proof}
First, we prove that $(U(s,r,s)-\lambda I)^{-1}$ exists and is in $B(bv_{p})$
for $\lambda \notin \left[ r-2s,r+2s\right] $ and next that the operator $%
(U(s,r,s)-\lambda I)$ is not invertible for $\lambda \in \left[ r-2s,r+2s%
\right] .$

Let $\lambda \notin \left[ r-2s,r+2s\right] .$ Let $y=(y_{k})\in bv_{p}$.
This implies that $(y_{k}-y_{k-1})\in \ell _{p}$. Solving the equation $%
(U(s,r,s)-\lambda I)x=y,$ we find the matrix in the proof of Theorem \ref%
{Theo.2.2}. Then we obtain that 
\begin{equation*}
x_{k}-x_{k-1}=(U(s,r,s)-\lambda I)^{-1}(y_{k}-y_{k-1}).
\end{equation*}

Since $(U(s,r,s)-\lambda I)^{-1}\in (\ell _{p},\ell _{p})$ by Theorem \ref%
{Theo.2.2}, \ $(x_{k})\in bv_{p}$. This shows that $(U(s,r,s),bv_{p})%
\subseteq \left[ r-2s,r+2s\right] .$

Now, let $\lambda \in \left[ r-2s,r+2s\right] $ and $\lambda \neq r$. Then $%
(U(s,r,s)-\lambda I)^{-1}$ exists. Using (\ref{2.4}), it can be shown not
belong in $B(\ell _{p})$. If $\lambda =r$, then similar arguments as in the
proof of Theorem \ref{2.2} show that the operator $U(s,0,s):bv_{p}%
\rightarrow bv_{p}$ is not invertible. This shows that $\left[ r-2s,r+2s%
\right] \subseteq \sigma \left( U(s,r,s),bv_{p}\right) $. This completes the
proof.
\end{proof}

Since the spectrum and fine spectrum of the matrix $U(s,r,s)$ as an operator
on the sequence space $bv_{p}$ are similar to that of the space $\ell _{p}$
in Section 2, to avoid the repetition of the similar statements we give the
results in the following theorem without proof.

\begin{theorem}
\label{Theo.3.3}

\qquad $($i$)$ $\sigma _{p}\left( U(s,r,s),bv_{p}\right) =\varnothing ,$%
\newline
\qquad $($ii$)$ $\sigma _{p}\left( U^{\ast }(s,r,s),bv_{p}^{\ast }\right)
=\varnothing ,$ \newline
\qquad $($iii$)$ $\sigma _{r}\left( U(s,r,s),bv_{p}\right) =\varnothing ,$%
\newline
\qquad $($iv$)$ $\sigma _{c}\left( U(s,r,s),bv_{p}\right) =\left[ r-2s,r+2s%
\right] .$
\end{theorem}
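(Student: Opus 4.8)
The plan is to follow the template of Section 2, establishing the four assertions in the order (i), (ii), (iii), (iv), since (iii) and (iv) will be purely formal consequences of (i), (ii) and Theorem \ref{Theo.3.2}.

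First I would prove (i). An eigenvector $x=(x_{k})\in bv_{p}$ for an eigenvalue $\lambda$ solves exactly the same system as in the proof of Theorem \ref{Theo.2.3}; writing $q=(r-\lambda)/s$ it reduces to the recurrence $x_{k+2}+qx_{k+1}+x_{k}=0$ with characteristic polynomial $x^{2}+qx+1$. The only change from the $\ell_{p}$ argument is the membership test: instead of asking whether $(x_{n})\in\ell_{p}$ I must ask whether $(x_{n})\in bv_{p}$, i.e. whether the differences $(x_{n}-x_{n-1})$ lie in $\ell_{p}$. In the degenerate case $\lvert q\rvert=2$ the solution is $x_{n}=\pm n\,x_{1}$, whose consecutive differences are constant and hence not in $\ell_{p}$ unless $x_{1}=0$; in the case $\lvert q\rvert\neq 2$ the solution grows like $\alpha_{2}^{\,n}$ with $\lvert\alpha_{2}\rvert>1$, so its differences blow up geometrically and again fail to be in $\ell_{p}$. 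Thus no nontrivial eigenvector exists and $\sigma_{p}(U(s,r,s),bv_{p})=\varnothing$.

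The hard part is (ii), the point spectrum of the adjoint, because here the duality differs from the $\ell_{p}$ situation: whereas $\ell_{p}^{\ast}\simeq\ell_{q}$ made Theorem \ref{Theo.2.4} immediate, now $bv_{p}^{\ast}\simeq D_{q}$ by the first lemma of the preliminaries, and $D_{q}$ is not a standard coordinate space. I would realize $U^{\ast}(s,r,s)$ as a matrix operator on $D_{q}$ and write out $U^{\ast}a=\lambda a$. Because the underlying matrix is a symmetric tri-band Toeplitz matrix, the eigen-equation is again governed by the characteristic polynomial $x^{2}+qx+1$, so the candidate solutions are the same two families as above. What changes is that membership in $D_{q}$ is measured by the tail-sum condition $\sum_{k}\bigl\lvert\sum_{j\ge k}a_{j}\bigr\rvert^{q}<\infty$; since a sequence growing like $\alpha_{2}^{\,n}$ or like $\pm n$ has divergent tail-sums, only the trivial solution survives, giving $\sigma_{p}(U^{\ast}(s,r,s),bv_{p}^{\ast})=\varnothing$. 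An equivalent and perhaps cleaner route is to conjugate by the isometric difference isomorphism $\Delta\colon bv_{p}\to\ell_{p}$, $(\Delta x)_{k}=x_{k}-x_{k-1}$: then $U(s,r,s)$ on $bv_{p}$ is similar to $V:=\Delta\,U(s,r,s)\,\Delta^{-1}$ on $\ell_{p}$, the adjoint $V^{\ast}$ is similar to $U^{\ast}(s,r,s)$ and acts as an explicit matrix on $\ell_{q}$, and one repeats the recurrence analysis of Theorems \ref{Theo.2.3}--\ref{Theo.2.4} verbatim. Getting this conjugated matrix (or, equivalently, the $D_{q}$ representation of $U^{\ast}$) correct, including the finite boundary corrections produced by the one-sided difference, is the step I expect to require the most care.

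Finally (iii) and (iv) are formal. For (iii) I would invoke the standard duality relation that $\lambda\in\sigma_{r}(T)$ exactly when $T_{\lambda}^{-1}$ exists while $\overline{R(T_{\lambda})}\neq X$, together with the fact that $\overline{R(T_{\lambda})}\neq X$ is equivalent to $\ker T_{\lambda}^{\ast}\neq\{0\}$, i.e. $\lambda\in\sigma_{p}(T^{\ast})$; hence $\sigma_{r}(U(s,r,s),bv_{p})\subseteq\sigma_{p}(U^{\ast}(s,r,s),bv_{p}^{\ast})=\varnothing$ by (ii). For (iv) I would use that, by Theorem \ref{Theo.3.2}, $\sigma(U(s,r,s),bv_{p})=[r-2s,r+2s]$ is the disjoint union of $\sigma_{p}$, $\sigma_{r}$ and $\sigma_{c}$; since the first two are empty, $\sigma_{c}(U(s,r,s),bv_{p})=[r-2s,r+2s]$.
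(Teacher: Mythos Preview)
The paper gives no proof of Theorem \ref{Theo.3.3} at all; it simply asserts that the argument is ``similar'' to Section 3 and omits the details. Your proposal carries out precisely this program --- rerun the recurrence analysis of Theorems \ref{Theo.2.3}--\ref{Theo.2.6} with the $\ell_{p}$ membership test replaced by the $bv_{p}$ and $bv_{p}^{\ast}\simeq D_{q}$ tests --- so it is entirely in line with the paper's intended route and is correct.

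Where you actually go beyond the paper is in part (ii): the paper's proof of Theorem \ref{Theo.2.4} was the one-liner $U^{\ast}=U^{t}=U$, which works because $\ell_{p}^{\ast}\simeq\ell_{q}$ is again a coordinate space on which the same symmetric matrix acts. You correctly notice that this shortcut does not transfer verbatim, since $bv_{p}^{\ast}\simeq D_{q}$ and the relevant membership condition is the tail-sum test; your two suggested fixes (work directly in $D_{q}$, or conjugate by the isometry $\Delta:bv_{p}\to\ell_{p}$ and pass to $\ell_{q}$) are both sound ways to close that gap. One small caution on your Case 2 for (i): when $|q|\neq 2$ the two roots can lie on the unit circle (real $q$ with $|q|<2$), so the solution need not ``grow like $\alpha_{2}^{\,n}$''; however the oscillatory solution still has differences bounded away from zero along a subsequence, so $(x_{n})\notin bv_{p}$ and the conclusion stands.
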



\begin{thebibliography}{99}
\bibitem{akh1} A. M. Akhmedov, F. Ba\c{s}ar, \textit{On spectrum of the Ces%
\`{a}ro operator}, Proc. Inst. Math. Mech. Natl. Acad. Sci. Azerb. 19,
(2004) 3-8.

\bibitem{akh2} A. M. Akhmedov, F. Ba\c{s}ar, \textit{On the fine spectrum of
the Ces\`{a}ro operator in $c_{0}$}, Math. J. Ibaraki Univ. 36 (2004) 25-32.

\bibitem{akh3} A. M. Akhmedov, F. Basar, \textit{The fine spectra of the
difference operator $\Delta $ over the sequence space $bv_{p}$, $(1\leq
p<\infty )$}, Acta Math. Sin., 23 (2007) 1757-1768.

\bibitem{Akhmedov-Basar-4} B. Altay, F. Ba\c{s}ar, \textit{On the fine
spectrum of the generalized difference operator $B(r,s)$ over the sequence
spaces $c_{0}$ and $c$}, Internat. J. Math. Math. Sci. 18 (2005) 3005-3013.

\bibitem{vkkalt} M. Altun and V. Karakaya, \textit{Fine spectra of lacunary
matrices}, J. Commun. Math. Anal. 7, No. 1,(2009) 1-10.

\bibitem{Basar-Altay} F. Ba\c{s}ar and B. Altay, On the space of sequence of
p-bounded variation and related matrix mappings, Ukranian Math. J. 55(1)
(2003), 135-147.

\bibitem{bil1} H. Bilgi\c{c}, H. Furkan, \textit{On the fine spectrum of the
generalized diffrerence operator $B(r,s)$ over the sequence spaces $\ell
_{p} $ and $bv_{p}$}, Nonlinear Analysis 68 (2008) 499-506.

\bibitem{bil} H. Bilgi\c{c}, H. Furkan, \textit{On the fine spectrum of the
operator $B(r,s,t)$ over the sequence spaces $\ell _{1}$ and $bv$}, Math.
and Comp. Modelling 45 (2007) 883-891.

\bibitem{Choud-Nanda} B. Choudhary, S. Nanda, Functional Analysis with
Applications, John Wiley\&Sons Inc., New York, Chishester, Brisbane,
Toronto, Singapore, 1989.

\bibitem{cos} C. Co\c{s}kun, \textit{The spectra and fine spectra for p-Ces%
\`{a}ro operators}, Turkish J. Math., 21 (1997), 207-212.

\bibitem{fur} H. Furkan, H. Bilgic, K. Kayaduman, \textit{On the fine
spectrum of the generalized difference operator $B(r,s)$ over the sequence
spaces $\ell _{1}$ and $bv$}, Hokkaido Math. J. 35 (2006) 897-908.

\bibitem{Graham-Knuth-Patashnik} R.L. Graham, D.E. Knuth, O. Patashnik,
Concrete mathematics, Addison-Wesley Publishing company, 1989.

\bibitem{gol} S. Goldberg, \textit{Unbounded Linear Operators}, Dover
Publications, Inc., New York, 1985.

\bibitem{gon} M. Gonz\`{a}lez, \textit{The fine spectrum of the Ces\`{a}ro
operator in $\ell _{p}$ ($1<p<\infty $)}, Arch. Math. 44 (1985) 355-358.

\bibitem{Imaminezhad-Miri} M.\ Imam\i nezhed, M.R. Miri, The dual space of
the sequence space $bv_{p},(1\leq p<\infty )$, Acta Math. Univ. Comenian,
79(1)(2010), 143-149.

\bibitem{kre} E. Kreyszig, \textit{Introductory Functional Analysis with
Applications}, John Wiley \& Sons Inc., New York, 1978.

\bibitem{VM} V. Karakaya, M. altun, Fine spectra of upper triangular
double-band matrices, J. Comput. Appl. Math. 234(2010), 1387-1394.

\bibitem{oku} J. T. Okutoyi, \textit{On the spectrum of $C_{1}$ as an
operator on $bv$}, Commun. Fac. Sci. Univ. Ank. Ser. A1 41 (1992) 197-207.

\bibitem{rea} J. B. Reade, \textit{On the spectrum of the Ces\`{a}ro operator%
}, Bull. London Math. Soc. 17 (1985) 263-267.

\bibitem{rho} B. E. Rhoades, \textit{The fine spectra for weighted mean
operators}, Pacific J. Math. 104 (1) (1983) 219-230.

\bibitem{wen} R. B. Wenger, \textit{The fine spectra of H\"{o}lder
summability operators}, Indian J. Pure Appl. Math., 6, (1975)\ 695-712.
\end{thebibliography}
\end{document}